\documentclass
[
11pt]
{amsart}%

\usepackage{hyperref}
\usepackage{cite}
\usepackage{amssymb}
\usepackage{amsmath}
\usepackage{amsthm}
\usepackage{amsfonts}
\usepackage{graphicx}
\usepackage{bbm}
\usepackage{xcolor}
\usepackage[toc,page]{appendix}
\usepackage{subfigure}
\usepackage{mathtools}
\usepackage{bm} 
\usepackage[normalem]{ulem}
\usepackage{comment}





\newtheorem{theorem}{Theorem}[section]

\newtheorem{definition}[theorem]{Definition}
\newtheorem{notation}[theorem]{Notation}

\newtheorem{remark}[theorem]{Remark}
\newtheorem{example}[theorem]{Example}

\newtheorem*{acknowledgement}{Acknowledgement}

\numberwithin{equation}{section}













\newcommand{\R}{\mathbb{R}}  
\newcommand{\Prob}{\mathbb{P}}





\begin{document}
\title[Small deviations, Chung's LiL Kolmogorov]{An application of the Gaussian correlation inequality to the small deviations for a Kolmogorov diffusion}

\author[Carfagnini]{Marco Carfagnini{$^{\dag }$}}
\thanks{\footnotemark {$\dag$} Research was supported in part by NSF Grants DMS-1712427 and DMS-1954264.}
\address{Department of Mathematics\\
University of Connecticut\\
Storrs, CT 06269,  U.S.A.}
\email{marco.carfagnini@uconn.edu}

\keywords{Diffusion processes, Kolmogorov diffusion, functional limit laws, small ball problem}

\subjclass{Primary 60F17; Secondary  60F15, 60G51, 60J65}

\date{\today \ \emph{File:\jobname{.tex}}}

\begin{abstract}
We consider an iterated Kolmogorov diffusion $X_{t}$ of step $n$. The small ball problem for $X_{t}$ is solved by means of the Gaussian correlation inequality. We also prove Chung’s laws of iterated logarithm for $X_t$ both at time zero and infinity.
\end{abstract}

\maketitle
\tableofcontents

\section{Introduction}
Let $\left\{ X_t \right\}_{0 \leqslant t \leqslant T} $ be an $\R^n$-valued stochastic process with continuous paths such that $X_0=0$ a.s. where $T>0$ is fixed. Denote by $W_0 (\R^n) $ the space of $\R^n$-valued continuous functions on $[0,T]$ starting at zero. Given a norm $\Vert \cdot \Vert $ on $W_0 (\R^n) $, the small ball problem for $X_t$ consists in finding the rate of explosion of 
\[
-\log \Prob \left( \Vert X  \Vert < \varepsilon \right)
\]
as $\varepsilon \rightarrow 0$. More precisely, a process $X_{t}$ is said to satisfy a \emph{small deviation principle} with rates $\alpha$ and $\beta$ if there exist a constant $c>0$ such that

\begin{equation}\label{e.general.s.d}
\lim_{\varepsilon \rightarrow 0} -\varepsilon^\alpha \vert \log \varepsilon \vert^\beta \log \Prob \left( \Vert X \Vert < \varepsilon \right) =c.
\end{equation}
The values of $\alpha, \beta$ and $c$ depend on the process $X_{t}$ and on the chosen norm on $W_{0} \left( \R^n \right)$.
Small deviation principles have many applications including metric entropy estimates and Chung's law of the iterated logarithm. We refer to the survey paper  \cite{LiShao2001} for more details. 

We say that a process $X_{t}$  satisfies \emph{Chung's law of the iterated logarithm} (LIL) as $ t\rightarrow \infty$  (resp. as $t\rightarrow 0$) with rate $a \in \R_+$ if there exists a constant $C$ such that
\begin{equation}\label{e.general.chung.}
\liminf_{t\rightarrow \infty} \left(\frac{\log \log t}{t} \right)^a \max_{0\leqslant s \leqslant t} \vert X_{s} \vert = C, \quad a.s.
\end{equation}
(resp. $ \liminf_{t\rightarrow 0} \left(\frac{\log \vert \log t \vert }{t} \right)^a \max_{0\leqslant s \leqslant t} \vert X_{s} \vert = C $ a.s.). When $X_{t}$ is a Brownian motion, it was proven in a famous paper by K.-L.~Chung in 1948 that \eqref{e.general.chung.} holds with $a = \frac{1}{2}$ and $C= \frac{\pi}{\sqrt{8}}$. Find the rates $\alpha$ and $\beta$ such that the limit in \eqref{e.general.s.d} exists, and then find the constant $c$ is an extremely hard problem in general. Even the estimation of the rate of explosion of \eqref{e.general.s.d}  is usually a difficult problem. Indeed, as can be surmised in \cite{KuelbsLi1993, LiLinde1999}, the small ball problem for Gaussian processes is equivalent to metric entropy problems in functional analysis. In \cite{KuelbsLi1993a} and \cite{Talagrand1994}  a Brownian sheet in H\"older and uniform norm is considered, and the integrated Brownian motion in the uniform norm is the content of \cite{KhoshnevisanShi1998}, and the m-fold integrated Brownian motion in both the uniform and $L^2$-norm is considered in \cite{ChenLi2003}. In \cite{Remillard1994} and \cite{CarfagniniGordina2020} a small deviation principle and Chung's LIL are proved for a class of stochastic integrals and for a hypoelliptic Brownian motion on the Heisenberg group. When $X_t$ is a Gaussian process with stationary increments, upper and lower bounds on \eqref{e.general.s.d} can be found in  \cite{Shao1993, MonradRootzen1995}.

In this paper we consider the Kolmogorov diffusion of step $n$.

\begin{definition}
Let $T>0$ and $b_{t}$ be a one-dimensional standard Brownian motion. The stochastic process $\{ X_{t}\}_{0\leqslant t \leqslant T} $ on $\R^{n}$ defined by
\begin{align*}
X_{t} := \left( b_{t}, \int_{0}^{t} b_{t_{2}}dt_{2}, \int_{0}^{t} \int_{0}^{t_{2}} b_{t_{3}}dt_{3} dt_{2}, \; \ldots \; ,   \int_{0}^{t} \int_{0}^{t_{2}} \ldots \int_{0}^{t_{n-1}}   b_{t_{n}}dt_{n} \ldots dt_{2}   \right)
\end{align*}
is the Kolmogorov diffusion of step $n$.
\end{definition}
$\{X_{t}\}_{0 \leqslant t \leqslant T}$ is a Markov process with generator given by $L= \frac{1}{2} \frac{\partial^{2}}{\partial x_{1}^{2}} + \sum_{d=2}^{n} x_{d-1} \frac{\partial}{\partial x_{d}} $. In particular, when $n=2$ $X_{t} $ is the Markov process associated to the differential operator $L= \frac{1}{2} \frac{\partial^2}{\partial x^2} + x \frac{\partial}{\partial y} $ and it was first introduce by A. N. Kolmogorov in \cite{Kolmogorov34}, where he obtained an explicit expression for its transition density. Later, L. H\"ormander in \cite{Hormander1967a}   used $L$ as the simplest example of a hypoelliptic second order differential operator. More precisely, the operator $L$ satisfies the weak H\"ormander condition. $\{ X_{t}\}_{0\leqslant t \leqslant T} $ is a Gaussian process and its law $\mu$ is a Gaussian measure on the Banach space $\left( W_{0} (\R^{n}), \Vert \cdot \Vert  \right) $, where 
\begin{align*}
\Vert f \Vert := \max_{0\leqslant t \leqslant T} \vert f(t) \vert, \quad \forall f \in  W_{0} (\R^{n}).
\end{align*}
The main result of this paper is Theorem \ref{thm.new}, where we prove the small deviation principle \eqref{e.general.s.d} for $X_{t}$ with rates $\alpha = 2 $, $\beta=0$, and constant $c = \frac{\pi}{\sqrt{8}} $. Our proof relies on the Gaussian correlation inequality (GCI), see e.g.  \cite{Royen2014, LatalaMatlak2017}, applied to the Gaussian measure $\mu$ on $W_{0} (\R^{n})$. A different application of the GCI to estimate small balls probabilities is given in \cite{Li1999}. In Theorem \ref{thm.new} we also state Chung's LIL at time zero and infinity for $X_{t}$ with rates given by $a=\frac{1}{2} $ and $a= \frac{2n-1}{2} $ respectively. 

The stochastic processes considered in \cite{KhoshnevisanShi1998, Remillard1994, CarfagniniGordina2020} all satisfy a scaling property, that is, there exists a scaling constant $\delta \in (0, \infty)$ such that $  X_{\varepsilon t}  \stackrel{(d)}{=} \varepsilon^\delta  X_t  $. Properties of Gaussian measures on Banach spaces and scaling property  haven been used to show the existence of a small deviation principle for some processes such as a Brownian motion with values in a finite dimensional Banach space in  \cite{deAcosta1983a}  and an integrated Brownian motion in \cite{KhoshnevisanShi1998}. Moreover, in \cite{KhoshnevisanShi1998, Remillard1994, CarfagniniGordina2020} the scaling rate $\delta$ coincides with the rate of Chung's LIL at infinity, and the small deviations' rates are given by $\beta=0, \, \alpha= \frac{1}{\delta} $.  The Kolmogorov diffusion does not satisfy a scaling property with respect to the standard Euclidean norm, and the small deviations rate $\alpha$ is not related to the Chung's LIL rate.

Lastly, large deviations and Chung's LIL at time zero for the limsup of the Kolmogorov diffusion are  discussed in Section \cite[Section 4.2]{BenArousWang2019} and \cite[Example 3.5]{CarfagniniFoldesHerzog2021} respectively. 

The paper is  organized as follows. In Section \ref{Sec2} we collect some examples and state the main result of this paper, namely, small deviation principle and Chung's LIL at time zero and infinity for a step $n$ Kolmogorov diffusion. Section \ref{Sec4} contains the proof of the main result.

\section{The setting and main results} \label{Sec2}
\begin{notation}\label{not.max}\rm4
Let $X_t$ be an $\R^n$-valued stochastic process with  $X_0 = 0$ a.s. Then $X_t^\ast$ denotes the process defined  by 
\begin{align*}
X_t^\ast := \max_{0 \leqslant s \leqslant t} \vert X_s \vert ,
\end{align*}
where $\vert \cdot \vert$  denotes the Euclidean norm.
\end{notation}

\begin{notation}\label{not.evalues}\rm[Dirichlet eigenvalues in $\R^n$] We denote by $ \lambda_1^{(n)}$ the lowest Dirichlet eigenvalue of $-\frac{1}{2} \Delta_{\R^n} $ on the unit ball in $\R^n$.
\end{notation}

Let us collect some examples of Chung's LIL and small deviation principle.

\begin{example}\label{ex.bm}\rm[Brownian motion]
Let $X_t$ be a standard Brownian motion. Then $X_{\varepsilon t} \stackrel{(d)}{=} \varepsilon^{\frac{1}{2} } X_t  $, and it satisfies  the small deviation principle 
\begin{align}\label{eqn.sd.bm}
\lim_{\varepsilon \rightarrow 0} -\varepsilon^2 \log \Prob \left( X^\ast_{T} < \varepsilon \right) =\lambda_1^{(1)}T,
\end{align}
where  $\lambda_1^{(1)}$ is defined in Notation \ref{not.evalues}, see e.g. \cite[Lemma 8.1]{IkedaWatanabe1989}. Moreover, in a famous paper by K.-L. Chung in 1948 it was proven that
\begin{align}\label{eqn.chung.bm}
\liminf_{t\rightarrow \infty} \left( \frac{\log \log t}{t} \right)^{\frac{1}{2}} \max_{0\leqslant s \leqslant t} \vert X_t \vert = \sqrt{\lambda_1^{(1)}} \quad a.s.
\end{align}
\end{example}

\begin{example}\label{ex.int.bm}\rm[Integrated Brownian motion]. Let $X_t := \int_0^t b_s ds $, where $b_s$ is a one-dimensional standard Brownian motion. It is easy to see that $ X_{\varepsilon t}  \stackrel{(d)}{=} \varepsilon^\frac{3}{2} X_t $. In \cite{KhoshnevisanShi1998} it is shown that there exists a finite constant $c_0>0$ such that 
\begin{align}\label{eqn.chung.integ.bm}
& \liminf_{t\rightarrow \infty} \left( \frac{\log \log t}{t} \right)^{\frac{3}{2}} \max_{0\leqslant s \leqslant t} \vert X_t \vert = c_0 \quad a.s.
\end{align}
and \eqref{eqn.chung.integ.bm} was used to prove that
\begin{align*}
& \lim_{\varepsilon \rightarrow 0} -\varepsilon^{\frac{2}{3}} \log \Prob \left( X^\ast_1 < \varepsilon \right) = c_0^{\frac{2}{3}}.
\end{align*}
\end{example}

\begin{example}\label{ex.iterated.int.bm}\rm[Iterated integrated Brownian motion]
Let $b_{t}$ be a one-dimensional Brownian motion starting at zero. Denote by $X_{1}(t) := b_{t} $ and 
\begin{align*}
X_{d} (t) := \int_{0}^{t} X_{d-1} (s) ds, \; \; t \geqslant 0, d \geqslant 2,
\end{align*}
the $d$-fold integrated Brownian motion for a positive integer $d$. Note that $X_{d} (\varepsilon t) \stackrel{(d)}{=}  \varepsilon^{\frac{2d-1}{2} } X_{d} (t)$. In \cite{ChenLi2003} it was shown that for any integer $d$ there exists a constant $\gamma_{d}>0$ such that 
\begin{align}
& \lim_{\varepsilon \rightarrow 0} - \varepsilon^{\frac{2}{2d-1}} \log \Prob \left(  \max_{0\leqslant t \leqslant 1} \vert X_{d} (t) \vert < \varepsilon \right) = \gamma_{d}^{\frac{2}{2d-1}},  \notag
\\
& \liminf_{t\rightarrow \infty} \left(  \frac{\log \log t}{t}\right)^{\frac{2d-1}{2}} \max_{0\leqslant s \leqslant t} \vert X_{d} (s) \vert = \gamma_{d} \quad a.s. \label{eqn.chung.iterated.integ.bm}
\end{align}
\end{example}

Our main object is a Kolmogorov diffusion on $\R^{n}$ defined by 
\begin{align*}
X_t :=\left( X_{1} (t) , \ldots, X_{n}(t)  \right),
\end{align*}
where 
\begin{align*}
& X_{d} (t) := \int_{0}^{t} \int_{0}^{t_{2}} \cdots \int_{0}^{t_{d-1}}   b_{t_{d}}dt_{d} \cdots dt_{2} , \; \; \text{for} \; \;  d=3, \ldots , n  ,
\end{align*}
and $X_{2} (t):=\int_0^t b_{s} ds$, $X_{1} (t) :=b_{t}$, where $ b_t$ is a one-dimensional standard Brownian motion. Note that $X_{d} (\varepsilon t) \stackrel{(d)}{=}  \varepsilon^{\frac{2d-1}{2}} X_{d} (t)$ for all $d=1, \ldots ,n$, and hence the process $X_{t}$ does not have a scaling property with respect to the Euclidean norm $\vert \cdot \vert$ in $\R^{n}$. 

\begin{theorem}\label{thm.new}
Let $T>0$ and $X_t$ be the Kolmogorov diffusion on $\R^{n}$. Then
\begin{align}
& \lim_{\varepsilon \rightarrow 0} -\varepsilon^2 \log \Prob \left( X^\ast_{T} < \varepsilon \right) = \lambda_{1}^{(1)}T, \label{eqn.sd.kolm}
\\
&  \liminf_{t\rightarrow 0} \sqrt{\frac{\log \vert \log t \vert }{t}} \max_{0\leqslant s \leqslant t} \vert X_s \vert =  \sqrt{\lambda_1^{(1)}} \quad a.s.\label{eqn.chung.kolm.zero}
\\
&  \liminf_{t\rightarrow \infty} \left(\frac{\log \log t  }{t} \right)^{\frac{2n-1}{2}} \max_{0\leqslant s \leqslant t} \vert X_s \vert = \gamma_{n}  \quad a.s.\label{eqn.chung.kolm.infinty}
\end{align}
where $\lambda_1^{(1)}$ is defined in Notation \ref{not.evalues}, and $\gamma_{n}$ is given by \eqref{eqn.chung.iterated.integ.bm}
\end{theorem}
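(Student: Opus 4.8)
The plan is to prove each of the three statements by comparing the vector process $X_t$ with the single coordinate $X_d$ that governs the relevant asymptotic regime, and to use the Gaussian correlation inequality (GCI) together with the known asymptotics for the extreme coordinates $X_1=b$ (Example \ref{ex.bm}) and $X_n$ (Example \ref{ex.iterated.int.bm}). Throughout I would exploit the pathwise sandwich $\max_{1\le d\le n}\max_{0\le s\le t}|X_d(s)| \le X_t^\ast \le \sum_{d=1}^n\max_{0\le s\le t}|X_d(s)|$, valid since $|x|_\infty\le|x|\le|x|_1$ in $\R^n$, and the coordinatewise scaling $X_d(\varepsilon t)\stackrel{(d)}{=}\varepsilon^{(2d-1)/2}X_d(t)$.

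For the small-deviation statement \eqref{eqn.sd.kolm} the lower bound on the limit is immediate: since $X_T^\ast\ge\max_{0\le s\le T}|b_s|$, the inclusion $\{X_T^\ast<\varepsilon\}\subseteq\{\max_{0\le s\le T}|b_s|<\varepsilon\}$ and \eqref{eqn.sd.bm} give $\liminf_{\varepsilon\to0}-\varepsilon^2\log\Prob(X_T^\ast<\varepsilon)\ge\lambda_1^{(1)}T$. For the matching upper bound I would apply the GCI to the symmetric convex slabs $C_d(\rho):=\{f\in W_0(\R^n):\max_{0\le s\le T}|f_d(s)|<\rho\}$ under the centered Gaussian law $\mu$ of $X$. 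Fixing $\delta\in(0,1)$ and setting $\rho_1=\varepsilon\sqrt{1-\delta^2}$, $\rho_d=\varepsilon\delta/\sqrt{n-1}$ for $d\ge2$, one has $\sum_d\rho_d^2=\varepsilon^2$, so $\bigcap_d C_d(\rho_d)\subseteq\{X_T^\ast<\varepsilon\}$, and iterating the GCI yields $\Prob(X_T^\ast<\varepsilon)\ge\Prob(\max_{0\le s\le T}|b_s|<\rho_1)\prod_{d\ge2}\Prob(\max_{0\le s\le T}|X_d(s)|<\rho_d)$. Multiplying by $-\varepsilon^2$ and letting $\varepsilon\to0$, the first factor contributes $\lambda_1^{(1)}T/(1-\delta^2)$ by \eqref{eqn.sd.bm}, while each $d\ge2$ factor contributes $0$: by Example \ref{ex.iterated.int.bm} and scaling the corresponding $-\log$-probability is of order $\varepsilon^{-2/(2d-1)}$, so after multiplying by $\varepsilon^2$ it is of order $\varepsilon^{2-2/(2d-1)}\to0$ for $d\ge2$. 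Letting $\delta\to0$ gives $\limsup_{\varepsilon\to0}-\varepsilon^2\log\Prob(X_T^\ast<\varepsilon)\le\lambda_1^{(1)}T$, completing \eqref{eqn.sd.kolm}.

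For the Chung laws I would use the same sandwich. For \eqref{eqn.chung.kolm.infinty}, the bound $X_t^\ast\ge\max_{0\le s\le t}|X_n(s)|$ together with \eqref{eqn.chung.iterated.integ.bm} gives $\liminf_{t\to\infty}(\log\log t/t)^{(2n-1)/2}X_t^\ast\ge\gamma_n$. For the reverse inequality I would show the subdominant coordinates are negligible on the Chung scale: by the (upper) law of the iterated logarithm for the Gaussian process $X_d$ one has a.s.\ as $t\to\infty$ that $\max_{0\le s\le t}|X_d(s)|=O\!\big(t^{(2d-1)/2}(\log\log t)^{1/2}\big)$, hence $(\log\log t/t)^{(2n-1)/2}\max_{0\le s\le t}|X_d(s)|=O\!\big(t^{-(n-d)}(\log\log t)^{n}\big)\to0$ for each $d<n$. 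Using $X_t^\ast\le\sum_d\max_{0\le s\le t}|X_d(s)|$ and $\liminf(A+B)\le\liminf A+\limsup B$ with $B\to0$ a.s., the liminf is at most $\gamma_n$, giving \eqref{eqn.chung.kolm.infinty}. The statement \eqref{eqn.chung.kolm.zero} is handled identically with $X_1=b$ as the dominant coordinate: the lower bound follows from $X_t^\ast\ge\max_{0\le s\le t}|b_s|$ and Chung's law for Brownian motion at time zero (obtained from \eqref{eqn.chung.bm} by time inversion $b_t\mapsto t\,b_{1/t}$), and for the upper bound the same estimate gives $\sqrt{\log|\log t|/t}\,\max_{0\le s\le t}|X_d(s)|=O\!\big(t^{d-1}\log|\log t|\big)\to0$ as $t\to0$ for each $d\ge2$.

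The main obstacle is the upper bound in \eqref{eqn.sd.kolm}: a naive equal splitting $\rho_d=\varepsilon/\sqrt n$ in the GCI produces the wrong constant $n\lambda_1^{(1)}T$, so the weights must be chosen to assign almost the entire budget $\varepsilon^2$ to the Brownian coordinate, and one must verify quantitatively, via the small-ball rate $\varepsilon^{-2/(2d-1)}$ from Example \ref{ex.iterated.int.bm}, that the smoother coordinates $d\ge2$ contribute nothing to $-\varepsilon^2\log\Prob$; the limit $\delta\to0$ then recovers the sharp constant. The remaining inputs are standard: the GCI for the slabs $C_d(\rho)$, and the upper LIL bounds on $\max_{0\le s\le t}|X_d(s)|$, the latter following from Gaussian tail estimates (Borell--TIS/Fernique) and Borel--Cantelli along a geometric subsequence, with gaps filled by monotonicity of $t\mapsto X_t^\ast$.
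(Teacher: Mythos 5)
Your proposal is correct and follows essentially the same strategy as the paper: the lower bound in \eqref{eqn.sd.kolm} from the inclusion into the Brownian small ball, the upper bound via the Gaussian correlation inequality applied to the coordinate slabs with almost all of the budget assigned to the Brownian coordinate, and the two Chung laws by sandwiching $X^\ast_t$ between the dominant coordinate and a sum in which the remaining coordinates are shown to be negligible on the relevant scale. The only (immaterial) differences are that you split $\varepsilon$ in the Euclidean rather than the $\ell^1$ sense, invoke the Chen--Li rate $\varepsilon^{-2/(2d-1)}$ for each $d\geqslant 2$ where the paper reduces everything to the integrated Brownian motion and \cite{KhoshnevisanShi1998}, and justify the negligibility of the subdominant coordinates via the upper LIL where the paper uses the pathwise bound $\max_{0\leqslant s\leqslant t}\vert X_d(s)\vert\leqslant \frac{t^{d-1}}{(d-1)!}\max_{0\leqslant u\leqslant t}\vert b_u\vert$.
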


\begin{remark}\rm\label{rmk.different rates}  
By \eqref{eqn.chung.bm} and Brownian inversion, it follows that a standard Brownian motion satisfies Chung's LIL at time zero and infinity  with rate $a=\frac{1}{2}$, and it satisfies a small deviation principle with rate $\alpha = 2$. By \eqref{eqn.sd.kolm}, the $n$-step Kolmogorov diffusion $X_{t}$ satisfies the same small deviation principle as a one-dimensional standard Brownian motion. As far as Chung's LIL for $X_{t}$ is concerned, the first component dominates when $t \rightarrow 0$ with rate $a=\frac{1}{2}$, and the $n$-th component dominates as $t\rightarrow \infty$ with rate $a =\frac{2n-1}{2}$.
\end{remark}

\section{Proofs}\label{Sec4}

\begin{proof}[Proof of Theorem \ref{thm.new}]
Let us first prove the small deviation principle \eqref{eqn.sd.kolm}. One has that $\Prob \left( X^\ast_{T} <\varepsilon \right) \leqslant \Prob \left( b^\ast_{T} <\varepsilon \right)$, and hence by \eqref{eqn.sd.bm} it follows that 
\begin{align*}
\lambda_{1}^{(1)} T \leqslant \liminf_{\varepsilon \rightarrow 0} -\varepsilon^{2} \Prob \left( X^\ast_{T} <\varepsilon \right).
\end{align*}
Let us now show that 
\begin{align*}
\limsup_{\varepsilon \rightarrow 0} -\varepsilon^{2} \Prob \left( X^\ast_{T} <\varepsilon \right) \leqslant \lambda_{1}^{(1)}T.
\end{align*}
For any $x_{1},\ldots, x_{n} \in (0,1)$ such that $ x_{1} + \cdots + x_{n} =1$ we have that 
\begin{align*}
& \Prob \left( X^\ast_{T} <\varepsilon \right) \geqslant \Prob \left( \max_{0\leqslant t \leqslant T} \vert X_{1}(t) \vert < x_{1} \varepsilon,    \ldots, \max_{0\leqslant t \leqslant T} \vert X_{n}(t) \vert < x_{n} \varepsilon, \right)
\\
& \geqslant \Prob\left( \max_{0\leqslant t \leqslant T} \vert X_{1}(t) \vert < x_{1}\varepsilon  \right) \cdots \Prob\left( \max_{0\leqslant t \leqslant T} \vert X_{n}(t) \vert < x_{n}\varepsilon   \right) ,
\end{align*}
where in the second line we used the Gaussian correlation inequality for the law of the process $\{ X_{t} \}_{0 \leqslant t \leqslant T} $ which is a Gaussian measure on $W_{0}(\R^{n}) $. Thus,
\begin{align}\label{eqn.almost.there}
- \varepsilon^{2} \log  \Prob \left( X^\ast_{T} <\varepsilon \right) \leqslant - \sum_{d=1}^{n} \varepsilon^{2} \log \Prob\left( \max_{0\leqslant t \leqslant T} \vert X_{d}(t) \vert < x_{d}\varepsilon  \right) .
\end{align}
Note that, for any $d=2, \ldots, n$ 
\begin{align*}
&\max_{0\leqslant t \leqslant T} \vert X_{d}(t) \vert  \leqslant \int_{0}^{T}\int_{0}^{t_{2}} \cdots \int_{0}^{t_{d-2}}\max_{0\leqslant t \leqslant T} \vert X_{2} (t) \vert  dt_{d-1}\cdots dt_{2} = \frac{T^{d-2}}{(d-2)!} \max_{0\leqslant t \leqslant T} \vert X_{2} (t) \vert ,
\end{align*}
and hence 
\begin{align}\label{eqn.estimate.idk}
& \Prob\left( \max_{0\leqslant t \leqslant T} \vert X_{2} (s) \vert  <  \frac{(d-2)!}{T^{d-2}}x_{d}  \varepsilon \right)  \leqslant \Prob\left( \max_{0\leqslant t \leqslant T} \vert X_{d} (s) \vert  < x_{d} \varepsilon \right),
\end{align}
and by \cite[Theorem 1.1]{KhoshnevisanShi1998} we have that, for any $d=2, \ldots, n$ 
\begin{align}
& 0\leqslant \limsup_{\varepsilon \rightarrow 0} -\varepsilon^{2} \log \Prob\left( \max_{0\leqslant t \leqslant T} \vert X_{d} (t) \vert  < x_{d} \varepsilon \right) \notag
\\
& \leqslant \lim_{\varepsilon \rightarrow 0} -\varepsilon^{2} \log \Prob\left( \max_{0\leqslant t \leqslant T} \vert X_{2} (t) \vert  <  \frac{(d-2)!}{T^{d-2}}x_{d}  \varepsilon \right)  \notag
\\
& =  \lim_{\varepsilon \rightarrow 0} -\varepsilon^{2} \log \Prob\left( \max_{0\leqslant t \leqslant T} \left|  \int_{0}^{t} b_{s} ds \right|  <  \frac{(d-2)!}{T^{d-2}}x_{d}  \varepsilon \right)=0. \label{eqn.almost.there2}
\end{align}
Thus, by  \eqref{eqn.almost.there}  and \eqref{eqn.estimate.idk}
\begin{align*}
- \varepsilon^{2} \log  \Prob \left( X^\ast_{T} <\varepsilon \right) \leqslant - \varepsilon^{2} \log \Prob \left( b^\ast_{T} <x_{1} \varepsilon \right) -\sum_{d=2}^{n} \varepsilon^{2} \log \Prob\left( \max_{0\leqslant t \leqslant T} \vert X_{2} (t) \vert  <  \frac{(d-2)!}{T^{d-2}}x_{d}  \varepsilon \right),
\end{align*}
and by \eqref{eqn.almost.there2} and \eqref{eqn.sd.bm} it follows that 
\begin{align*}
\limsup_{\varepsilon \rightarrow 0} - \varepsilon^{2} \log  \Prob \left( X^\ast_{T} <\varepsilon \right) \leqslant \frac{\lambda_{1}^{(1)}}{x_{1}^{2}}T.
\end{align*}
The result follows by letting $x_{1}$ go to one.

Let us now prove \eqref{eqn.chung.kolm.zero}. By \eqref{eqn.chung.bm} and Brownian time inversion, it is easy to see that 
\begin{equation}\label{e.lil.bm.zero}
\liminf_{t\rightarrow 0} \sqrt{ \frac{\log \vert \log t \vert }{t} } \max_{0\leqslant s \leqslant t} \vert b_s \vert = \sqrt{\lambda_1} \quad a.s.  
\end{equation}
Note that 
\begin{align*}
& \vert b_s \vert^{2} \leqslant \vert X_{s}\vert^{2} = \vert b_{s} \vert^{2} + \sum_{d=2}^{n} \vert X_{d} (s) \vert^{2}
\\
& \leqslant \vert b_{s} \vert^{2} + \max_{0\leqslant u \leqslant s} \vert b_{u} \vert^{2} \sum_{d=2}^{n}  \frac{s^{2d-2}}{(d-1)!^{2}},
\end{align*}
and hence 
\begin{align*}
& \frac{\log \vert \log t \vert }{t}  \max_{0 \leqslant s \leqslant t} \vert b_s \vert^2 \leqslant  \frac{\log \vert \log t \vert }{t}  \max_{0 \leqslant s \leqslant t} \vert X_{s} \vert^{2} 
\\
& \leqslant  \frac{\log \vert \log t \vert }{t}  \max_{0 \leqslant s \leqslant t} \vert b_s \vert^2  \left(1+ \sum_{d=2}^{n}  \frac{t^{2d-2}}{(d-1)!^{2}}, \right)
\end{align*}
By \eqref{e.lil.bm.zero} it follows that, for any $d=2, \ldots, n$
\[
\lim_{t\rightarrow 0}  t^{2d-2} \frac{\log \vert \log t \vert }{t} \max_{0 \leqslant s \leqslant t} \vert b_s \vert^2 =0 \quad a.s.
\]
and thus
\[
\liminf_{t\rightarrow 0}  \frac{\log \vert \log t \vert }{t} \max_{0 \leqslant s \leqslant t} \vert X_s \vert^2 =\liminf_{t\rightarrow 0}  \frac{\log \vert \log t \vert }{t} \max_{0 \leqslant s \leqslant t} \vert b_s \vert^2= \lambda_1 \quad a.s.
\]
which completes the proof of \eqref{eqn.chung.kolm.zero}. Let us now prove \eqref{eqn.chung.kolm.infinty}. Set 
\[
\phi(t):= \frac{\log \log t}{t}.
\]
By \eqref{eqn.chung.iterated.integ.bm} we have that, for any $d=1,\ldots, n-1$
\begin{align}\label{eqn.here}
\lim_{t\rightarrow \infty} \phi(t)^{\frac{2n-1}{2}}\max_{0\leqslant s \leqslant t} \vert X_{d}(s) \vert = \lim_{t\rightarrow \infty} \phi(t)^{n-d} \phi(t)^{\frac{2d-1}{2}}\max_{0\leqslant s \leqslant t} \vert X_{d}(s) \vert =0 \; \; a.s.
\end{align}
since $\phi(t) \rightarrow 0$ as $t\rightarrow \infty$. Note that 
\begin{align*}
\vert X_{n} (s) \vert^{2} \leqslant \vert X_{s} \vert^{2} = \sum_{d=1}^{n-1} \vert X_{d} (s) \vert^{2} + \vert X_{n} (s) \vert^{2},
\end{align*}
and hence 
\begin{align*}
&\phi(t)^{2n-1}  \max_{0\leqslant s \leqslant t}  \vert X_{n} (s)  \vert^{2} \leqslant \phi(t)^{2n-1}  \max_{0\leqslant s \leqslant t}  \vert X_{s}  \vert^{2}
\\
& \leqslant \sum_{d=1}^{n-1} \phi(t)^{2n-1}  \max_{0\leqslant s \leqslant t}  \vert X_{d} (s)  \vert^{2}  + \phi(t)^{2n-1}  \max_{0\leqslant s \leqslant t}  \vert X_{n} (s)  \vert^{2}.
\end{align*}
Thus, by \eqref{eqn.chung.iterated.integ.bm} and \eqref{eqn.here} it follows that 
\begin{align*}
& \liminf_{t\rightarrow \infty}  \phi(t)^{2n-1} \max_{0 \leqslant s \leqslant t} \vert X_s \vert^2 =\liminf_{t\rightarrow \infty}  \phi(t)^{2n-1} \max_{0 \leqslant s \leqslant t}  \vert X_(n)(s) \vert^2= \gamma_{n}^{2} \quad a.s.
\end{align*}
and \eqref{eqn.chung.kolm.infinty}  is proven.
\end{proof}

\begin{acknowledgement}
The  author thanks Davar Khoshnevisan and Zhan Shi for suggesting the Gaussian correlation inequality \cite{Royen2014, LatalaMatlak2017}.
\end{acknowledgement}

\providecommand{\bysame}{\leavevmode\hbox to3em{\hrulefill}\thinspace}
\providecommand{\MR}{\relax\ifhmode\unskip\space\fi MR }
\providecommand{\MRhref}[2]{%
  \href{http://www.ams.org/mathscinet-getitem?mr=#1}{#2}
}
\providecommand{\href}[2]{#2}

\end{document}